\numberwithin{equation}{section}
\theoremstyle{definition}
\newtheorem{dfn}{Definition}[section]
\theoremstyle{plain}
\newtheorem{thm}{Theorem}[section]
\newtheorem{pro}{Proposition}[section]
\theoremstyle{definition}
\newtheorem{rem}{Remark}[section]
\newtheorem{exa}{Example}[section]
\newcommand{\N}{\mathbb{N}}
\newcommand{\R}{\mathbb{R}}
\newcommand{\E}{\mathbb{E}}
\renewcommand{\P}{\mathbb{P}}
\newcommand{\Cov}{\mathbb{C}\mathrm{ov}}
\newcommand{\F}{\mathcal{F}}
\newcommand{\K}{\mathrm{K}}
\newcommand{\e}{\mathrm{e}}
\newcommand{\la}{\langle}
\newcommand{\ra}{\rangle}
\newcommand{\1}{\mathbf{1}}
\renewcommand{\d}{\mathrm{d}}
\begin{document}
\title[Gaussian Volterra Processes with Jump]
{Prediction of Gaussian Volterra Processes with Compound Poisson Jumps}
\date{\today}

\author[Maleki Almani]{Hamidreza Maleki Almani}
\address{Department of Mathematics and Statistics, University of Vaasa, P.O. Box 700, FIN-65101 Vaasa, FINLAND}
\email{hamidreza.maleki@uwasa.fi}

\author[Shokrollahi]{Foad Shokrollahi}
\address{Department of Mathematics and Statistics, University of Vaasa, P.O. Box 700, FIN-65101 Vaasa, FINLAND}
\email{foad.shokrollahi@uwasa.fi}

\author[Sottinen]{Tommi Sottinen}
\address{Department of Mathematics and Statistics, University of Vaasa, P.O. Box 700, FIN-65101 Vaasa, FINLAND}
\email{tommi.sottinen@uwasa.fi}

\begin{abstract}
We consider a Gaussian Volterra process with compound Poisson jumps and derive its prediction law.
\end{abstract}


\keywords{fractional Brownian motion;
Gaussian Volterra process;
prediction law;
compound Poisson process}

\subjclass[2020]{91G20; 91G80; 60G22}

\maketitle


\section{Introduction}

We are interested in a mixed process $X=G+J$, where the continuous part $G$ is a so-called Gaussian Volterra process.  In older terminology these are processes that admit canonical representation of multiplicity one.  A typical Gaussian Volterra process is the fractional Brownian motion as shown in Norros et al. \cite{Norros-Valkeila-Virtamo-1999}.  See Section \ref{sect:examples} for the definition of fractional Brownian motion and for other examples.  Intuitively, a Gaussian process $G$ is a Gaussian Volterra process if one can construct a martingale $M$ from by using a non-anticipative linear transformation and then represent the original process $G$ in a non-anticipative way as a linear transformation of the martingale $M$.  The motivation to use Gaussian Volterra processes is that for them one can calculate their prediction law in terms of the kernels that transfer the Gaussian Volterra process into its driving martingale $M$ and vice versa.
In the mixture $X=G+J$ the jump part $J$ will be an independent compound Poisson process with square-integrable jump distribution.

One motivation to study processes of the type $X=G+J$ comes from mathematical finance.  Indeed, it is well-known that the returns of financial assets do not follow Gaussian distribution \cite{blattberg2010comparison,fama1965behavior,mandelbrot1997variation} and the returns also exhibit jumps, or shocks \cite{akgiray1986stock,akgiray1987compound,ball1985jumps,jarrow1984jump,press1967compound}.  Also, there is evidence of long-range dependence in the returns also explain the presence of long-memory \cite{baillie1996long,chan2006stock,harvey1995predictable,kim2008sovereign,rajan2003great}.  Thus models where the returns are Gaussian with jumps seems more reasonable: The Gaussian part could take care of the long-range dependence with fractional Brownian motion (fBm) as the Gaussian Volterra process, and the shocks would come from the compound Poisson part.  Then one can use the result of this paper to calculate imperfect hedges in the mixed model in the similar way as done in \cite{Shokrollahi-Sottinen-2017, Sottinen-Viitasaari-2018a}.  Indeed, this is work in progress by the authors.

In this paper we derive the prediction law of the mixed process $X=G+J$.

The rest of the paper is organized as follows.  In Section \ref{sect:gvp} we define Gaussian Volterra processes and derive their prediction laws.  Section \ref{sect:gvp_jumps} is the main section of the paper where we introduce the Gaussian Volterra processes with compound Poisson jumps and derive their prediction laws.  Finally, in Section \ref{sect:examples} we provide examples of Gaussian Volterra processes. 

\section{Gaussian Volterra processes}\label{sect:gvp}

A Gaussian Volterra process is a Gaussian process that has a canonical representation of multiplicity one with respect to a Gaussian martingale.  The Gaussian Volterra process is defined in Definition \ref{dfn:gvp} below in terms of covariance functions. The Gaussian Volterra representations follow from Definition \ref{dfn:gvp} and are stated in Proposition \ref{pro:gvp}. 

For convenience we consider processes over the compact time-interval $[0,T]$ with an arbitrary but fixed time horizon $T>0$.

Let $G=(G_t)_{t\in[0,T]}$ be a centered Gaussian process with $G_0=0$ and covariance function $R\colon[0,T]^2\to\R$ defined on a complete probability space $(\Omega,\F,\P)$.

A kernel $K:[0,T]^2\to\R$ is a Volterra kernel if $K(t,s)=0$ whenever $t<s$.  For a Volterra kernel $K$ we define its associated operator $\K$ as.
$$
\K[f](t) = \int_0^t f(s)K(t,s)\, \d s.
$$
Denote 
\begin{eqnarray*}
\1_t(s) &=& \1_{[0,t)}(s) = \left\{
\begin{array}{rl}
1, & \mbox{if } s\in [0,t), \\
0, & \mbox{otherwise}
\end{array}
\right..
\end{eqnarray*} 
The adjoint associated operator $\K^*$ of the Volterra kernel $K$ is given by extending linearly the relation
$$
\K^*[\1_t](s) = K(t,s). 
$$
It turns out that $\K^*_G$ for a Gaussian Volterra process with covariance
$$
R(t,s) = \int_0^{t\wedge s} K(t,u)K(s,u)\, \d v(u)
$$
extends to an isometry from $\Lambda$ to $L^2([0,T],\d v)$ where $v$ is given in Definition \ref{dfn:gvp}(i) and $\Lambda$, the space of wiener integrands, is the closure of the indicator functions $\1_t$, $t\in[0,T]$, in the inner product
$$
{\la \1_t,\1_s\ra}_{\Lambda} = R(t,s).
$$ 

\begin{rem}
By A\`os et al. \cite{Alos-Mazet-Nualart-2001}, if $K$ is of bounded variation in its first argument, we can write for any elementary $f$ 
$$
\K^*[f](t) = f(t)K(T,t) - \int_t^T\left[f(u)-f(t)\right]K(\d u, t).
$$
Moreover, we have for elementary $f$ and $g$ that
$$
\int_0^T \K^*[f](t)g(t)\, \d t 
=
\int_0^T g(t) \K[h](\d t)
$$
justifying the name ``adjoint'' associated operator.
\end{rem}

For Gaussian Volterra representations we recall what is the co-called abstract Wiener integral (for more information on abstact Wiener integrals and their relation to conditioning we refer to \cite{Sottinen-Yazigi-2014})).

The linear space $\mathcal{L}$ is the closure of of the random variables $G_t$, $t\in[0,T]$, in $L^2(\Omega,\F,\P)$.  The spaces $\Lambda$ and $\mathcal{L}$ are isometric.  Indeed, the mapping
$$
\1_t \mapsto G_t
$$
extends to an isometry.  This isometry is called the abstract Wiener integral and we denote it 
$$
\int_0^T f(t)\, \d G_t
$$
for a $f\in\Lambda$.

\begin{dfn}[Gaussian Volterra process]\label{dfn:gvp}
Let $G=(G_t)_{t\in[0,T]}$ be a centered Gaussian process with covariance function $R\colon [0,\infty)^2 \to \R$. Assume that	
\begin{enumerate}
\item there exists an increasing function $v\colon[0,T]\to\R$ and a Volterra kernel $K\colon[0,T]^2\to\R$ such that $\int_0^t K(t,s)^2 \, \d v(s)<\infty$ for all $t\in [0,T]$ and
$$
R(t,s) = \int_0^{t\wedge s} K(t,u)K(s,u)\, \d v(u),
$$
\item for each $t\in[0,T]$ the equation
$$
\K^*[K^{-1}(t,\cdot)](s) = \1_t(s)
$$	 
admits a solution $K^{-1}(t,\cdot)$.
\end{enumerate}
\end{dfn}

Note that by Definition \ref{dfn:gvp}(ii) the operator $\K^*$ is invertible and we have
$$
(\K^*)^{-1}[\1_t](s) =  K^{-1}(t,s). 
$$

We note that due to Definition \ref{dfn:gvp}(i) for a Gaussian Volterra process the space $\Lambda$ is isometric to $L^2([0,T], \d v)$.  Indeed, we have 
$$
{\la f,g\ra}_{\Lambda} = {\la \K^*[f], \K^*[g]\ra}_{L^2([0,T],\d v)}.
$$
In particular, this means that the mapping $\K^*$ in Definition \ref{dfn:gvp}(ii) is an isometry between $\Lambda$ and $L^2([0,t], \d v)$.  We also note that due to Definition \ref{dfn:gvp}(ii) the operator $\K^*$ is invertible and we have, in particular, that
$$
(\K^*)^{-1}[\1_t](s) = K^{-1}(t,s).
$$

The following representation proposition is a direct consequence of Definition \ref{dfn:gvp}.  Indeed, Proposition \ref{pro:gvp} could have been taken as the definition of Gaussian Volterra process.

\begin{pro}[Volterra representation]\label{pro:gvp}
Let $G$ be a Gaussian Volterra process.  Let $K^{-1}$ be the kernel in Definition \ref{dfn:gvp}(ii).  Then the process 
$$
M_t = \int_0^t K^{-1}(t,s)\, \d G_s
$$
is a Gaussian martingale with bracket $\la M\ra_t = v(t)$.  Moreover, 
$$
G_t = \int_0^t K(t,s)\, \d M_s,
$$
where $v$ and $K$ are as in Definition \ref{dfn:gvp}(i).
\end{pro}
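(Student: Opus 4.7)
The plan is to exploit in parallel two isometries: the abstract Wiener integral $\Lambda\to\mathcal{L}$, $f\mapsto\int_0^T f(s)\,\d G_s$, and the operator $\K^*\colon\Lambda\to L^2([0,T],\d v)$. First, since Definition \ref{dfn:gvp}(ii) together with the isometry property of $\K^*$ guarantees $K^{-1}(t,\cdot)=(\K^*)^{-1}[\1_t]\in\Lambda$, the random variable $M_t=\int_0^t K^{-1}(t,s)\,\d G_s$ is well-defined as an element of $\mathcal{L}$, and by linearity the family $(M_t)_{t\in[0,T]}$ is jointly centered Gaussian.

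Next, I would compute the covariance of $M$ by chaining the two isometries:
\[
\E[M_t M_s]={\la K^{-1}(t,\cdot),K^{-1}(s,\cdot)\ra}_{\Lambda}={\la\1_t,\1_s\ra}_{L^2([0,T],\d v)}=v(t\wedge s).
\]
This identifies $M$ as a centered Gaussian process with the covariance of a martingale of bracket $v$; Gaussianity then upgrades orthogonality of increments to independence, and together with adaptedness to its own filtration this yields the martingale property and $\la M\ra_t=v(t)$.

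For the representation $G_t=\int_0^t K(t,s)\,\d M_s$, I would establish the transfer rule $\int_0^T f(s)\,\d G_s=\int_0^T \K^*[f](s)\,\d M_s$ for every $f\in\Lambda$. By the very definition of $M$ it holds for $f=K^{-1}(t,\cdot)$, and hence by linearity on the span of $\{K^{-1}(t,\cdot):t\in[0,T]\}$. Since $(\K^*)^{-1}$ is an isometry and linear combinations of the $\1_t$ are dense in $L^2([0,T],\d v)$, this span is dense in $\Lambda$; both sides of the transfer rule are isometric linear maps in $f$, so the identity extends by continuity. Specializing to $f=\1_t$ yields $G_t=\int_0^T K(t,s)\,\d M_s=\int_0^t K(t,s)\,\d M_s$ by the Volterra property of $K$.

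The delicate point I expect is mainly conceptual: the inverse kernel $K^{-1}(t,\cdot)$ lives in the abstract space $\Lambda$ rather than in a concrete $L^2$-space, so pointwise calculations must be avoided in favour of isometry-and-density arguments. There is also a minor bookkeeping check that $K^{-1}$ is itself a Volterra kernel, so that $\int_0^t$ and $\int_0^T$ agree in the definition of $M_t$; this follows from the fact that $\K^*$ preserves the property of being supported on $[0,t]$, inherited from the Volterra structure of $K$.
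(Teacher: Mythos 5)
Your proposal is correct and follows exactly the route the paper intends: the paper offers no written proof, merely asserting the proposition is ``a direct consequence of Definition \ref{dfn:gvp}'', and your chaining of the two isometries (abstract Wiener integral and $\K^*$) together with the density of the span of the indicators $\1_t$ in $L^2([0,T],\d v)$ is precisely the argument being left implicit. The covariance computation $\E[M_tM_s]=v(t\wedge s)$, the upgrade from orthogonal to independent increments via Gaussianity, and the extension of the transfer identity by linearity and continuity are all sound.
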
	 

Note that form Proposition \ref{pro:gvp} we immediately see that the filtrations $\mathbb{F}^G$ and $\mathbb{F}^M$ coincide.  

The Volterra representations of Proposition \ref{pro:gvp} extend immediately to the following transfer principle for Wiener integrals.

\begin{pro}[Transfer principle]\label{pro:tp}
Let $f\in\Lambda$ and $g\in L^2([0,T],\d v)$. Then
\begin{eqnarray*}
\int_0^T f(t)\, \d G_t &=& \int_0^T \K^*[f](t)\, \d M_t, \\
\int_0^T g(t)\, \d M_t &=& \int_0^T (\K^*)^{-1}[g](t)\, \d G_t.
\end{eqnarray*}	
\end{pro}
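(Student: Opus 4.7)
\textbf{Plan of proof for Proposition \ref{pro:tp}.} The strategy is the standard one for identities between stochastic integrals: verify the equalities on indicator functions, extend linearly to elementary functions, and then use isometry/density to pass to the whole space. The whole proof is essentially a bookkeeping exercise on the diagram of isometries already established.

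For the first identity, fix $f = \1_t$ with $t \in [0,T]$. By the very definition of the abstract Wiener integral recalled before Definition \ref{dfn:gvp}, the left-hand side equals $G_t$. On the right-hand side, $\K^*[\1_t](s) = K(t,s)$ by the defining property of the adjoint associated operator, so by Proposition \ref{pro:gvp} we get
$$
\int_0^T \K^*[\1_t](s)\, \d M_s = \int_0^t K(t,s)\, \d M_s = G_t.
$$
Thus the two sides agree on indicators, and hence on the linear span of indicators by linearity. To extend to general $f \in \Lambda$, I would use that the map $f \mapsto \int_0^T f(t)\, \d G_t$ is, by construction, an isometry from $\Lambda$ to $\mathcal{L} \subset L^2(\Omega,\F,\P)$, while the map $f \mapsto \int_0^T \K^*[f](t)\, \d M_t$ is the composition of the isometry $\K^*\colon \Lambda \to L^2([0,T],\d v)$ (noted after Definition \ref{dfn:gvp}) with the It\^o isometry $g \mapsto \int_0^T g(t)\, \d M_t$, and is therefore also an isometry. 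Two continuous linear maps that coincide on a dense set coincide everywhere, so the identity extends to all of $\Lambda$.

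For the second identity, the argument is symmetric. For $g = \1_t$, the left-hand side is $M_t$ by definition of the It\^o integral, while on the right-hand side Definition \ref{dfn:gvp}(ii) gives $(\K^*)^{-1}[\1_t](s) = K^{-1}(t,s)$, and the definition of $M$ in Proposition \ref{pro:gvp} yields $\int_0^T K^{-1}(t,s)\, \d G_s = M_t$. Linear extension covers elementary $g$, and the isometry of $(\K^*)^{-1}\colon L^2([0,T],\d v)\to \Lambda$ composed with the abstract Wiener integral yields a continuous isometry that agrees with $g \mapsto \int_0^T g(t)\, \d M_t$ on a dense set, so the two agree everywhere.

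I do not foresee any real obstacle: the non-trivial content (the isometry property of $\K^*$, the martingale representation of $G$ via $M$, and the invertibility of $\K^*$) has all been arranged in Definition \ref{dfn:gvp} and Proposition \ref{pro:gvp}. The only point that requires a small amount of care is keeping track of which integral lives in which Hilbert space when invoking the density argument, in particular ensuring that the indicators $\{\1_t\colon t\in[0,T]\}$ are dense in $\Lambda$ (by definition of $\Lambda$) and in $L^2([0,T],\d v)$ (standard).
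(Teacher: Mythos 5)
Your proof is correct and follows exactly the route the paper intends: the paper gives no written proof, asserting that the transfer principle ``extends immediately'' from Proposition \ref{pro:gvp}, and your argument (check on indicators via $\K^*[\1_t](s)=K(t,s)$ and $(\K^*)^{-1}[\1_t](s)=K^{-1}(t,s)$, then extend by linearity and the isometries) is precisely the standard extension being alluded to. No gaps; nothing further needed.
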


In what follows we will use the following notation for the conditional mean, the conditional covariance and the conditional law of a stochastic process $Y$:
\begin{eqnarray*}
\hat m_t^Y(u)      &=& \E\left[Y_t \big| \F^Y_u\right], \\
\hat R_Y(t,s|u)    &=& \Cov[Y_t,Y_s\big| \F^Y_u], \\
\hat P_t^Y(\d y|u) &=& \P\left[Y_t\in \d y\big|\F^Y_u\right]
\end{eqnarray*}

We end this section by stating the prediction formula for Gaussian Volterra processes.  The formula and its proof is similar to that given in \cite{Sottinen-Viitasaari-2017b}.  We give here the proof in detail for the convenience of the readers.

\begin{pro}[Volterra Prediction]\label{pro:pred}
Let $G$ be a Gaussian Volterra process as in Definition \ref{dfn:gvp}. Let $u\le s\le t\le T$. Denote
$$
\Psi(t,s|u) = (\K^*)^{-1}[K(t,\cdot)-K(u,\cdot)](s) 
$$
and
$$
\Phi(\d x ; \mu,\sigma^2) =
\frac{1}{\sqrt{2\pi\sigma^2}}\e^{-\frac12\frac{(x-\mu)^2}{\sigma^2}}\, \d x.
$$
Then
\begin{eqnarray*}
\hat m^G_t(u) &=& G_u - \int_0^u \Psi(t,s|u)\, \d G_u,\\
\hat R_G(t,s|u) &=& R(t,s) - \int_0^{u} K(t,x)K(s,x)\, \d v(x), \\
\hat P_t^G(\d x | u) &=& \Phi(\d x; \hat m_t^G(u), \hat R(t,t|u)).
\end{eqnarray*}
\end{pro}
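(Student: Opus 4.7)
The plan is to reduce the prediction problem for $G$ to the easier prediction problem for the driving Gaussian martingale $M$ of Proposition \ref{pro:gvp}, using the fact that $\mathbb{F}^G = \mathbb{F}^M$, and then translate back to $G$ via the transfer principle of Proposition \ref{pro:tp}.

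First, I would exploit the representation $G_t = \int_0^t K(t,s)\, \d M_s$. Because $M$ is a Gaussian martingale with independent increments and $K(t,\cdot)$ is deterministic, conditioning on $\F^M_u = \F^G_u$ simply truncates the integral:
$$
\hat m^G_t(u) = \E\!\left[\int_0^t K(t,s)\, \d M_s \,\Big|\, \F^M_u\right] = \int_0^u K(t,s)\, \d M_s.
$$
Writing $G_u = \int_0^u K(u,s)\, \d M_s$ and subtracting gives $\hat m^G_t(u) = G_u + \int_0^u (K(t,s) - K(u,s))\, \d M_s$. Applying the second identity of Proposition \ref{pro:tp} converts this $\d M$-integral into a $\d G$-integral with integrand $(\K^*)^{-1}[K(t,\cdot) - K(u,\cdot)](s) = \Psi(t,s|u)$; matching signs gives the claimed formula for $\hat m^G_t(u)$.

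For the conditional covariance, the same conditioning argument, together with the It\^o isometry applied to $M$ whose bracket is $v$, yields
$$
\hat R_G(t,s|u) = \Cov\!\left[\int_u^t K(t,x)\, \d M_x,\, \int_u^s K(s,x)\, \d M_x\right] = \int_u^{t\wedge s} K(t,x)K(s,x)\, \d v(x),
$$
and subtracting this from $R(t,s) = \int_0^{t\wedge s} K(t,x)K(s,x)\, \d v(x)$ is the desired identity. The conditional law is then automatically Gaussian: the joint distribution of $G_t$ together with $\{G_r\}_{r\le u}$ is Gaussian, so the regular conditional distribution of $G_t$ given $\F^G_u$ is $\Phi(\,\cdot\,;\hat m_t^G(u), \hat R(t,t|u))$.

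The main obstacle I expect is the careful handling of the transfer principle in the step that produces $\Psi$: one needs to identify the integrand $(K(t,\cdot) - K(u,\cdot))\mathbf{1}_{[0,u]}$ in $L^2([0,T],\d v)$ with an element of $\Lambda$ supported on $[0,u]$, and verify that its image under $(\K^*)^{-1}$ agrees, as a Wiener integrand against $G$, with the formal expression $(\K^*)^{-1}[K(t,\cdot)-K(u,\cdot)]$ written in the statement. Once this identification is justified via the isometry $\K^*\colon \Lambda \to L^2([0,T],\d v)$, the remaining calculations are routine.
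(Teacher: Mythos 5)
Your proposal is correct and follows essentially the same route as the paper: condition the Volterra representation $G_t=\int_0^t K(t,s)\,\d M_s$ on $\F_u^M=\F_u^G$ to truncate the integral, split off $G_u$, transfer the remaining $\d M$-integral back to a $\d G$-integral via $(\K^*)^{-1}$ to obtain $\Psi$, and use the martingale's independent increments together with the isometry $\E[(\int_u^t g\,\d M)^2]=\int_u^t g^2\,\d v$ for the covariance. The only addition is your closing remark on justifying the transfer step as an identification in $\Lambda$ versus $L^2([0,T],\d v)$, a point the paper passes over silently (as it does the sign convention in $\Psi$, which you also flag).
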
	

\begin{proof}
It is well-known that conditional Gaussian processes are Gaussian.  Therefore it is enough to identify the conditional mean and conditional covariance.  

We consider first the conditional mean.  Now
\begin{eqnarray*}
\hat m^G_t(u) &=& \E\left[G_t\big| \F_u^G\right] \\
&=&
\E\left[\int_0^t K(t,s)\, \d M_s\big|\F_u^M\right] \\
&=&
\int_0^u K(t,s)\, \d M_s \\
&=&
\int_0^u K(u,s)\, \d M_s + \int_0^u \left[K(t,s)-K(u,s)\right]\, \d M_s \\
&=&
G_u - \int_0^u (\K^{*})^{-1}\left[K(t,\cdot)-K(u,\cdot)\right](s)\, \d G_s.
\end{eqnarray*}

Let us then consider the conditional covariance. Now
\begin{eqnarray*}
\hat R_G(t,s|u) &=&
\E\left[\left(G_t-\hat m^G_t(u)\right)\left(G_s-\hat m^G_s(u)\right)\big| \F^G_u\right] \\
&=&
\E\left[\left(\int_0^t K(t,x)\, \d M_x-\hat m^G_t(u)\right)\left(\int_0^s K(s,x)\,\d M_x-\hat m^G_s(u)\right)\bigg| \F^M_u\right] \\
&=&
\E\left[\int_u^t K(t,x)\,\d M_x\int_u^s K(s,x)\,\d M_x\bigg| \F^M_u\right] \\
&=&
\E\left[\int_u^t K(t,x)\,\d M_x\int_u^s K(s,x)\,\d M_x\right] \\
&=&
\int_u^{t\wedge s} K(t,x)K(s,x)\, \d v(x) \\
&=&
\int_0^{t\wedge s} K(t,x)K(s,x)\, \d v(x) - \int_0^u K(t,x)K(s,x)\, \d v(x) \\
&=&
R(t,s) - \int_0^u K(t,x)K(s,x)\, \d v(x).
\end{eqnarray*}
\end{proof}

\section{Gaussian Volterra process with jumps}\label{sect:gvp_jumps}

In this section we prove our main result, the prediction formula for Gaussian Volterra processes with compound Poisson jumps.  Indeed, we consider the process $X=(X_t)_{t\in[0,T]}$ given by
\begin{equation}\label{eq:gvpj}
X = G + J,
\end{equation}
where $G$ is a continuous Gaussian Volterra process and $J$ is an independent compound Poisson process with intensity $\lambda$ and jump distribution $F$. In other words 
$$
J_t = \sum_{k=1}^{N_t} \xi_k,
$$ 
where $N=(N_t)_{t\in[0,T]}$ is a Poisson process with intensity $\lambda$ and the jumps $\xi_k$, $k\in\N$, are i.i.d. with common distribution $F$, and they are independent of the Poisson process $N$ and the Gaussian Volterra process $G$.  We denote
\begin{eqnarray*}
\mu_1 &=& \E[\xi_k], \\
\mu_2 &=& \E[\xi_k^2].
\end{eqnarray*}

We note that it is crucial that the Gaussian Volterra part is continuous since it implies that $\mathbb{F}^X = \mathbb{F}^{G,J}$, i.e. the signals $G$ and $J$ can be separated from the signal $X$.  We refer to \cite{Azmoodeh-Sottinen-Viitasaari-Yazigi-2014} and references therein on the continuity of Gaussian processes.

Our main theorem is the following.  

\begin{thm}[Mixed Prediction]\label{thm:predjumps}
Let $X$ be given by \eqref{eq:gvpj}. Then  
\begin{eqnarray*}
\hat m^X_t(u) 
&=& X_u - \int_0^u \Psi(t,s|u)\, \d G_s  + \lambda(t-u)\mu_1, \\
\hat R_X(t,s|u) 
&=& R(t,s) - \int_0^u K(t,x)K(s,x)\, \d v(x) + \lambda(t\wedge s -u)\mu_2, \\
\hat P^X_t(\d x|u)
&=&
\int_{y\in\R} \Phi\left(\d x -y;\hat m^G_t(u), \hat R_G(t,t|u)\right)\sum_{n=0}^\infty 
\frac{\e^{-\lambda(t-u)}(\lambda(t-u))^n}{n!}
F^{* n}\left(\d y -J_u\right).
\end{eqnarray*}
Here $F^{*n}$ is the $n$-fold convolution of the distribution $F$:
\begin{eqnarray*}
F^{*1}(\d x) &=& F(\d x), \\
F^{*n}(\d x) &=& \int_{y\in\R} F(\d x-y)F^{*(n-1)}(\d y).
\end{eqnarray*}
\end{thm}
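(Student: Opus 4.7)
The plan is to exploit two structural facts that the paper has already set up: (a) continuity of $G$ implies $\mathbb{F}^X=\mathbb{F}^{G,J}$, so conditioning on $\F^X_u$ is the same as conditioning jointly on $\F^G_u$ and $\F^J_u$; and (b) $G$ and $J$ are independent. Together these reduce the computation to applying Proposition \ref{pro:pred} to the $G$-part, and applying the independent-increments property of the compound Poisson process $J$ to the $J$-part.

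For the conditional mean, I would write $\hat m^X_t(u)=\E[G_t\mid\F^{G,J}_u]+\E[J_t\mid\F^{G,J}_u]$. By independence, the first term equals $\E[G_t\mid\F^G_u]=\hat m^G_t(u)$, which Proposition \ref{pro:pred} gives explicitly as $G_u-\int_0^u\Psi(t,s|u)\,\d G_s$. For the second term, decompose $J_t=J_u+(J_t-J_u)$, observe that $J_t-J_u$ is independent of $\F^J_u$, and compute $\E[J_t-J_u]=\lambda(t-u)\mu_1$. Summing and using $X_u=G_u+J_u$ gives the claimed formula.

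For the conditional covariance I would expand $\Cov[X_t,X_s\mid\F^X_u]$ using bilinearity; by independence, the two cross terms $\Cov[G_\cdot,J_\cdot\mid\F^{G,J}_u]$ vanish. The Gaussian part is $\hat R_G(t,s|u)$ directly from Proposition \ref{pro:pred}. For the Poisson part, I would write $J_t=J_u+(J_t-J_u)$ and $J_s=J_u+(J_s-J_u)$, note that given $\F^J_u$ the increments are unconditional, and for $s\le t$ use the decomposition $J_t-J_u=(J_t-J_s)+(J_s-J_u)$ with independent increments to reduce the covariance to $\Var[J_{s}-J_u]=\lambda(s-u)\mu_2$. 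The symmetric case produces the $t\wedge s$ factor.

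For the conditional law, since $G$ and $J$ are (unconditionally, hence conditionally on $\F^X_u=\F^{G,J}_u$) independent, $\hat P^X_t(\cdot\mid u)$ is the convolution of $\hat P^G_t(\cdot\mid u)$ and $\hat P^J_t(\cdot\mid u)$. The Gaussian factor is $\Phi(\cdot\,;\hat m^G_t(u),\hat R_G(t,t|u))$ by Proposition \ref{pro:pred}. For $\hat P^J_t(\cdot\mid u)$, I would write $J_t=J_u+(J_t-J_u)$ and condition on the number $N_t-N_u$ of jumps in $(u,t]$, which is Poisson$(\lambda(t-u))$ and independent of $\F^J_u$; given $n$ such jumps, the sum of the $\xi_k$'s has law $F^{*n}$. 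Assembling these yields the mixture-convolution formula. The only real subtlety is the first step—justifying the filtration identity $\mathbb{F}^X=\mathbb{F}^{G,J}$ that lets independence kick in—but this is exactly the point stressed right before the theorem and follows from continuity of $G$ versus the pure-jump nature of $J$; everything else is bookkeeping with independent increments and Proposition \ref{pro:pred}.
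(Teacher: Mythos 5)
Your proposal is correct and follows essentially the same route as the paper: split $X$ into $G$ and $J$, use the filtration identity $\mathbb{F}^X=\mathbb{F}^{G,J}$ together with independence to handle each part separately via Proposition \ref{pro:pred} and the independent, stationary increments of the compound Poisson process, and obtain the conditional law as the convolution of the Gaussian law with the Poisson mixture of $F^{*n}$. No meaningful differences from the paper's argument.
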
	

\begin{proof}
Let us begin with the mean $\hat m^X_t(u)$.  The conditional mean of $G$ is already known.  As for the conditional mean of $J$, we have, by independence, that
\begin{eqnarray*}
\hat m^J_t(u) &=& 
\E\left[ J_t\big| \F^{J}_u\right] \\
&=&
J_u + \E\left[ J_t-J_u\big| \F^{J}_u\right] \\
&=&
J_u + \E\left[ J_{t-u}\right] \\
&=&
J_u + \lambda(t-u)\mu_1.
\end{eqnarray*}
The formula for the conditional mean follows from this.

Let us then consider the conditional variance $\hat R_X(t,s|u)$.  By independence we have
$$
\hat R_X(t,s|u) = \hat R_G(t,s|u) + \hat R_J(t,s|u).
$$
Now $\hat R_G(t,s|u)$ is known and for $\hat R_J(t,s|u)$ we have
\begin{eqnarray*}
\hat R_J(t,s|u) &=&\Cov\left[J_t,J_s\big| \F^J_u\right] \\
&=&
\Cov\left[J_t-J_u,J_s-J_u\big|\F^J_u\right] \\
&=&
\Cov\left[J_{t-u}, J_{s-u}\right] \\
&=&
\lambda(t\wedge s-u)\mu_2.
\end{eqnarray*}

Finally, let us consider the conditional law $\hat P^X_t(\d x |u)$. By the law of total probability and independence we have
\begin{eqnarray*}
\hat P_t(\d x|u)
 &=&
\int_{y\in\R} \P\left[G_t \in \d x -y\big| J_t=y, \F_u^X\right]\P\left[J_t\in \d y \big| \F_u^X\right] \\
&=&
\int_{y\in\R} \P\left[G_t \in \d x -y\big| \F_u^G\right]\P\left[J_t\in \d y \big| \F_u^J\right] \\
&=&
\int_{y\in\R} \hat P^G_t(\d x -y|u)\P\left[J_t-J_u\in \d y -J_u\big| J_u\right]
\end{eqnarray*}
and
\begin{eqnarray*}
\lefteqn{\P\left[J_t-J_u\in \d y -J_u\big| J_u\right]} \\
&=&
\sum_{n=0}^\infty\P\left[J_t-J_u\in \d y -J_u\big| N_t-N_u=n, J_u \right]\P\left[N_{t}-N_u=n\big| J_u\right] \\
&=&
\sum_{n=0}^\infty\P\left[\sum_{k=1}^n \xi_k\in \d y -J_u\Bigg| J_u\right]\P[N_{t-u}=n] \\
&=&
\sum_{n=0}^\infty F^{* n}\left(\d y -J_u\right)\P[N_{t-u}=n]
\end{eqnarray*}
The formula for the conditional law follows from this by plugging in $\hat P^G_t(\d x|u)$ and the Poisson probabilities.
\end{proof}

\begin{rem}
It is interesting to note that just like in the Gaussian case, also in the mixed case, the conditional covariance is deterministic.
\end{rem}

\section{Examples}\label{sect:examples}

In this section we give examples for different Gaussian Volterra processes $G$ for the prediction formula of Theorem \ref{thm:predjumps}.  This means that we give the kernel $K$, the function $v$, and the kernel $\Psi$.

\begin{exa}[fBm]\label{exa:fbm}
The fractional Brownian motion $B^H$ with Hurst index $H\in(0,1)$ is a centered Gaussian process with covariance
$$
R_H(t,s) = \frac12\left(t^{2H}+s^{2H}-|t-s|^{2H}\right).
$$
By Norros et al. \cite{Norros-Valkeila-Virtamo-1999} (see also \cite{Sottinen-Viitasaari-2017b})) $B^H$ is a Gaussian Volterra process with $v(t)=t$ and the Volterra kernel
\begin{equation}\label{K_H}
K_H(t,s) = c_H\Bigg\{\left(\frac{t}{s}\right)^{H-\frac12} (t-s)^{H-\frac12}\, 
-\, (H-\frac12)s^{\frac12-H}\int_s^tu^{H-\frac{3}{2}}(u-s)^{H-\frac12}\,\d u \Bigg\},
\end{equation}	
with a normalizing constant
$$
c_H = \sqrt{\frac{2H\Gamma(\frac{3}{2}-H)}{\Gamma(\frac12+H)\Gamma(2-2H)}},
$$
and we have
\begin{equation}\label{Psi_H}
\Psi_H(t,s|u) = \frac{\sin(\pi(H-\frac{1}{2}))}{\pi} s^{\frac{1}{2}-H}(u-s)^{\frac{1}{2}-H} \int_u^t \frac{z^{H-\frac{1}{2}}(z-u)^{H-\frac{1}{2}}}{z-s}\,\d z.
\end{equation}
\end{exa}

\begin{exa}[ccmfBm]\label{exa:ccmfbm}
The long-range dependent completely correlated  mixed fractional Brownian motion (ccmfBm) was introduced in \cite{Dufitinema-Shokrollahi-Sottinen-Viitasaari-2021}. It is a Gaussian process 
$$
M = a W +  bB^H
$$
where $B^H$ is a fractional Brownian motion with $H>1/2$, and it is constructed from the Brownnian motion $W$ by using the Volterra representation
$$
B^H_t = \int_0^t K_H(t,s)\, \d W_s
$$
(see Example \ref{exa:fbm} above).  The ccfBm, $M$ is also a Gaussian Volterra process with $v(t)=t$ and the Volterra kernel
$$
K_{a,b,H}(t,s) = a\,\1_t(s) + b\,K_H(t,s).
$$
In other word, it has the following Volterra representation
$$
M_t = \int_0^t K_{a,b,H}(t,s)\, \d W_s,
$$
and so
\begin{equation*}
	\Psi_{a,b,H}(t,s|u) =
	(\mathrm{K}_{a,b,H}^*)^{-1}\left[K_{a,b,H}(t,\cdot)-K_{a,b,H}(u,\cdot)\right](s).
\end{equation*}
Here
\begin{gather*}
\mathrm{K}_{a,b,H}^*[f](t) = af(t) + \frac{b\, c(H)(H-\frac12)}{t^{H-\frac12}}\int_t^T f(u)\frac{u^{H-\frac12}}{(u-t)^{\frac{3}{2}-H}}\,\d u,
\end{gather*}
for all $f\in\Lambda_{a,b,H}$, the space of all integrands from $M$, which is simply $L^2[0,T]$ in this case. The 
$(\mathrm{K}_{a,b,H}^*)^{-1}$ is the inverse operator of $\mathrm{K}_{a,b,H}^*$, where from \cite{Dufitinema-Shokrollahi-Sottinen-Viitasaari-2021}
\begin{gather*}
	(\mathrm{K}_{a,b,H}^*)^{-1}[f](t) = f(T)K_{a,b,H}^{-1}(t,T) - \int_t^T f(s)K_{a,b,H}^{-1}(\d s, t),\\
	K_{a,b,H}^{-1}(t,s) = \frac{1}{a}\1_t(s) + \frac{1}{a}\sum_{k=1}^\infty (-1)^k\left(\frac{b}{a}\right)^k\gamma_k(t,s),\\
	\gamma_k(t,s) = \frac{c(H)^k\Gamma(H+\frac12)^k}{\Gamma(k(H-\frac12))}\frac{1}{s^{H-\frac12}}\int_s^tu^{H-\frac12}(u-s)^{k(H-\frac12)-1}\d u.
\end{gather*}
\end{exa}		 

\begin{exa}[mfBm]	
The mixed fractional Brownian motion (mfBm) is the process 
$$
\tilde M = W + B^H
$$
where the Brownian motion $W$ and the fractional Brownian motion $B^H$ are independent.  The mfBm was introduced by Cheridito \cite{Cheridito-2001}.  In Cai et al. \cite{Cai-Chigansky-Kleptsyna-2016} it was shown that the mfBm is a Gaussian Volterra process with $v(t)=t$ and a certain kernel $\tilde K_{H}$.

Indeed, let $H>1/2$ and let $L(t,s)$ be the solution of the equation
$$
L(t,s) + H(2H-1)\int_0^t L(t,x)|s-x|^{2H-2}\,\d x = -H(2H-1)|t-s|^{2H-2},\quad 0\leq s,t
$$
Then by Theorem 2.2 of \cite{Cai-Chigansky-Kleptsyna-2016} we have the following: denote
$$
\phi(t) = 1- \int_0^t L(t,x)\, \d x.
$$
Then
$$
\tilde{W}_t = \E\left[\int_0^t \phi(s)\,\d W_s\,\bigg|\, \F^{\tilde M}_t\right] = \int_0^t q(t,s)\,\d\tilde{M}_s,
$$
is a Brownian motion, where $q(t,s)$ is the unique solution of the Wiener–Hopf equation:
\begin{equation*}
	q(t,s) + H(2H-1)\int_0^tq(t,x)|s-x|^{2H-2}\,\d x = \phi(s),\quad 0\leq s,t
\end{equation*}
and 
\begin{equation*}
	\tilde{M}_t = \int_0^t\tilde{K}_H(t,s)\,\d\tilde{W}_s,
\end{equation*}
where
\begin{equation*}
	\tilde{K}_H(t,s) = -\frac{\partial}{\partial s}\int_s^t q(t,x)\d x.
\end{equation*}
and here we have
\begin{equation*}
	\tilde{\Psi}(t,s|u) =
 (\widetilde{\mathrm{K}}_H^*)^{-1}\left[\tilde{K}_H(t,\cdot)-\tilde{K}_H(u,\cdot)\right](s),
\end{equation*}
where
$$
\widetilde{\mathrm{K}}_H^*[f](t) = f(t)\tilde{K}_H(T,t) - \int_t^T\left[f(u)-f(t)\right]\tilde{K}_H(\d u, t),
$$
for all $f\in\tilde\Lambda$, the space of all integrands from $\tilde{M}$, and 
$(\widetilde{\mathrm{K}}_H^*)^{-1}$ is the inverse operator of $\widetilde{\mathrm{K}}_H^*$.
\end{exa}	


\bibliographystyle{siam}
\bibliography{pipliateekki}

\begin{thebibliography}{10}

\bibitem{akgiray1986stock}
{\sc V.~Akgiray and G.~Booth}, {\em Stock price processes with discontinuous
  time paths: An empirical examination}, Financial Review, 21 (1986),
  pp.~163--184.

\bibitem{akgiray1987compound}
{\sc V.~Akgiray and G.~G. Booth}, {\em Compound distribution models of stock
  returns: An empirical comparison}, Journal of Financial Research, 10 (1987),
  pp.~269--280.

\bibitem{Alos-Mazet-Nualart-2001}
{\sc E.~Al{\`o}s, O.~Mazet, and D.~Nualart}, {\em Stochastic calculus with
  respect to {G}aussian processes}, Ann. Probab., 29 (2001), pp.~766--801.

\bibitem{Azmoodeh-Sottinen-Viitasaari-Yazigi-2014}
{\sc E.~Azmoodeh, T.~Sottinen, L.~Viitasaari, and A.~Yazigi}, {\em Necessary
  and sufficient conditions for {H}\"older continuity of {G}aussian processes},
  Statist. Probab. Lett., 94 (2014), pp.~230--235.

\bibitem{baillie1996long}
{\sc R.~T. Baillie}, {\em Long memory processes and fractional integration in
  econometrics}, Journal of econometrics, 73 (1996), pp.~5--59.

\bibitem{ball1985jumps}
{\sc C.~A. Ball and W.~N. Torous}, {\em On jumps in common stock prices and
  their impact on call option pricing}, The Journal of Finance, 40 (1985),
  pp.~155--173.

\bibitem{blattberg2010comparison}
{\sc R.~C. Blattberg and N.~J. Gonedes}, {\em A comparison of the stable and
  student distributions as statistical models for stock prices}, in
  Perspectives on promotion and database marketing: The collected works of
  Robert C Blattberg, World Scientific, 2010, pp.~25--61.

\bibitem{Cai-Chigansky-Kleptsyna-2016}
{\sc C.~Cai, P.~Chigansky, and M.~Kleptsyna}, {\em Mixed {G}aussian processes:
  {A} filtering approach}, Ann. Probab., 44 (2016), pp.~3032--3075.

\bibitem{chan2006stock}
{\sc K.~Chan and A.~Hameed}, {\em Stock price synchronicity and analyst
  coverage in emerging markets}, Journal of Financial Economics, 80 (2006),
  pp.~115--147.

\bibitem{Cheridito-2001}
{\sc P.~Cheridito}, {\em Mixed fractional {B}rownian motion}, Bernoulli, 7
  (2001), pp.~913--934.

\bibitem{Dufitinema-Shokrollahi-Sottinen-Viitasaari-2021}
{\sc J.~Dufitinema, F.~Shokrollahi, T.~Sottinen, and L.~Viitasaari}, {\em
  Long-range dependent completely correlated mixed fractional brownian motion},
  arXiv:2104.04992,  (2021).

\bibitem{fama1965behavior}
{\sc E.~F. Fama}, {\em The behavior of stock-market prices}, The journal of
  Business, 38 (1965), pp.~34--105.

\bibitem{harvey1995predictable}
{\sc C.~R. Harvey}, {\em Predictable risk and returns in emerging markets}, The
  review of financial studies, 8 (1995), pp.~773--816.

\bibitem{jarrow1984jump}
{\sc R.~A. Jarrow and E.~R. Rosenfeld}, {\em Jump risks and the intertemporal
  capital asset pricing model}, Journal of Business,  (1984), pp.~337--351.

\bibitem{kim2008sovereign}
{\sc S.-J. Kim and E.~Wu}, {\em Sovereign credit ratings, capital flows and
  financial sector development in emerging markets}, Emerging markets review, 9
  (2008), pp.~17--39.

\bibitem{mandelbrot1997variation}
{\sc B.~B. Mandelbrot and B.~B. Mandelbrot}, {\em The variation of certain
  speculative prices}, Springer, 1997.

\bibitem{Norros-Valkeila-Virtamo-1999}
{\sc I.~Norros, E.~Valkeila, and J.~Virtamo}, {\em An elementary approach to a
  {G}irsanov formula and other analytical results on fractional {B}rownian
  motions}, Bernoulli, 5 (1999), pp.~571--587.

\bibitem{press1967compound}
{\sc S.~J. Press}, {\em A compound events model for security prices}, Journal
  of business,  (1967), pp.~317--335.

\bibitem{rajan2003great}
{\sc R.~G. Rajan and L.~Zingales}, {\em The great reversals: the politics of
  financial development in the twentieth century}, Journal of financial
  economics, 69 (2003), pp.~5--50.

\bibitem{Shokrollahi-Sottinen-2017}
{\sc F.~Shokrollahi and T.~Sottinen}, {\em Hedging in fractional
  {B}lack-{S}choles model with transaction costs}, Statist. Probab. Lett., 130
  (2017), pp.~85--91.

\bibitem{Sottinen-Viitasaari-2017b}
{\sc T.~Sottinen and L.~Viitasaari}, {\em Prediction law of fractional
  {B}rownian motion}, Statist. Probab. Lett., 129 (2017), pp.~155--166.

\bibitem{Sottinen-Viitasaari-2018a}
\leavevmode\vrule height 2pt depth -1.6pt width 23pt, {\em Conditional-mean
  hedging under transaction costs in {G}aussian models}, Int. J. Theor. Appl.
  Finance, 21 (2018), pp.~1850015, 15.

\bibitem{Sottinen-Yazigi-2014}
{\sc T.~Sottinen and A.~Yazigi}, {\em Generalized {G}aussian bridges},
  Stochastic Process. Appl., 124 (2014), pp.~3084--3105.

\end{thebibliography}
\end{document}